\newcommand{\po}{\left(}
\newcommand{\pf}{\right)}
\newcommand{\cco}{\llbracket}
\newcommand{\ccf}{\rrbracket}
\newcommand{\pso}{\left\langle}
\newcommand{\psf}{\right\rangle}
\newcommand{\R}{\mathbb R} 
\newcommand{\T}{\mathbb T} 
\newcommand{\Z}{\mathbb Z} 
\newcommand{\N}{\mathbb N} 
\newcommand{\dd}{\text{d}}
\newcommand{\na}{\nabla}
\newcommand{\1}{\mathbbm{1}}
\newtheorem{thm}{Théorème}
\newtheorem{assu}{Hypoth\`ese}
\newtheorem{cor}[thm]{Corollary}
\title{Hypocoercivité $L^2$, inégalité de concentration,  temps d'atteinte et fonctions de Lyapunov.}
\author{Pierre Monmarché}
\begin{document}
\maketitle

\abstract{On montre que, pour un semi-groupe de Markov, l'hypocoercivité $L^2$ -- c'est-à-dire la contractivité d'une norme $L^2$ modifiée -- implique des inégalités de concentration quantitatives et l'intégrabilité exponentielle des temps d'atteinte des ensembles de mesure positive. D'autre part, pour les diffusions et sous une hypothèse forte d'hypoellipticité, on établit que l'hypocoercivité $L^2$ implique l'existence d'une fonction de Lyapunov pour le générateur associé. Une traduction en anglais est disponible.
}

\section{Introduction}


La motivation première de cette note est la comparaison entre deux méthodes d'obtention de taux de convergence à l'équilibre pour les processus de Markov: d'un côté, les inégalités fonctionnelles et les méthodes d'entropie et, de l'autre, l'approche probabiliste classique à la Meyn-Tweedie, basée sur un critère de Foster-Lyapunov et une condition de Doeblin locale ou de couplage. Le lien entre ces deux types d'arguments est étudié par Cattiaux, Guillin et leurs co-auteurs dans une série d'articles \cite{Cattiaux2008,CattiauxGuillin3,CattiauxGuillin2,CattiauxGuillinPAZ}, dans un cadre principalement restreint aux processus de Markov réversibles, avec une mesure d'équilibre qui satisfait typiquement une inégalité de Poincaré (par rapport à la forme de Dirichlet associée au processus). Néanmoins, dans la dernière décennie, les méthodes hypocoercives d'entropies modifiées se sont avérées très efficaces pour traiter des dynamiques non réversibles, non elliptiques ou non diffusives avec des arguments d'inégalités fonctionnelles. Nous nous concentrerons ici sur la méthode de Dolbeault-Mouhot-Schmeiser (DMS) \cite{DMS2011}, qui fournit des résultats d'hypocoercivité en norme $L^2$. Un des attraits de celle-ci est qu'elle décrit une construction générale et systématique de norme modifiée, indépendemment du processus étudié. La situation est assez différente pour l'approche Meyn-Tweedie qui nécessite la construction d'une fonction de Lyapunov adaptée à la dynamique, ce qui se révlèe parfois délicat pour des processus en un sens dégénérés. Une bonne illustration en est donnée par la comparaison des deux travaux \cite{Andrieu} et \cite{DurmusGuillinMonmarche}, qui s'intéressent tout deux à un processus cinétique de saut (le \emph{Bouncy Particle Sampler}), déterministe par morceau, respectivement par l'approche DMS et Meyn-Tweedie. Avec cette dernière, la construction de la fonction de Lyapunov est relativement pénible et en particulier induit des hypothèses techniques restrictives sur la log-densité de la mesure d'équilibre. En revanche, dans la méthode DMS, la construction est standardisée et mène à un résultat sous des conditions plus claires et générales sur la log-densité. Une question naturelle se pose donc: dans les cas d'une convergence (hypocoercive) exponentiellement rapide à l'équilibre dans $L^2$, existe-t-il toujours une fonction de Lyapunov au sens de Meyn-Tweedie ? La réponse est positive dans le cas des diffusions réversibles elliptiques \cite{CattiauxGuillinPAZ}, pour lesquelles la décroissance dans $L^2$ a lieu avec la norme usuelle. Ce résultat est basé sur l'intégrabilité exponentielle des temps d'atteinte du processus, elle-même obtenue à partir d'inégalités de concentration sur les fonctionnelles additives du processus démontrée dans \cite{Wu,CattiauxGuillin}. Ces deux résultats sont en fait intéressant en eux-même; en fait, dans certains cas, obtenir des estimations sur les temps d'atteinte est la question principale et une fonction de Lyapunov n'est qu'un outil intermédiaire pour les obtenir. Notre résultat principal est que, dès lors que la méthode DMS s'applique, alors ces deux résultats ont cours. De plus, à partir de l'intégrabilité exponentielle des temps d'atteinte, dans le cas d'une diffusion fortement hypoelliptique, on construit une fonction de Lyapunov pour le générateur associé, ce qui répond à la question initiale. En fait, comme nous l'avons appris après la rédaction de cette note, les inégalités de concentration ont été établies dans le preprint récent \cite{ReyBellet}. À notre connaissance, les autres résultats sont nouveaux.

\section{Resultats et démonstrations}

Soit $(X_t)_{t\geqslant 0}$  un processus de Markov conservatif à temps continu sur un espace Polonais $E$, avec une mesure de probabilité invariante $\mu$. On note $(P_t)_{t\geqslant 0}$ et $L$ le semi-groupe sur $L^2(\mu)$  associé et son générateur. On suppose que $L$ est fermé de domaine $D(L)$  dense dans $L^2(\mu)$. On note $\|\cdot\|_2$ et $\langle\cdot\rangle$ la norme et le produit scalaire usuels dans $L^2(\mu)$. Notre hypothèse principale est la suivante: 

\begin{assu}\label{HypoDMS}
Il existe $\rho>0$ et un opérateur linéaire symmétrique borné $S$ sur $L^2(\mu)$ tel que pour tout $f\in D(L)$,
\begin{eqnarray*}
\| S f\|_2 & \leqslant & \frac12 \| f-\mu f\|_2 \\
\langle f,Lf + SLf\rangle  & \leqslant & - \rho \| f-\mu f\|_2^2\,.
\end{eqnarray*}
\end{assu}
L'existence d'un tel $S$, que nous utiliserons dans cette note comme une boîte noire, est en fait l'outil principal dans la méthode DMS. Plus précisément, en consiérant $\varepsilon\in(0,1)$ et l'opérateur $A$ utilisé pour définir l'entropie modifiée $H$ dans  \cite{DMS2011}, alors nous posons $S=\varepsilon(\rm{Id}-\mu)( A +  A^*)(\rm{Id}-\mu)/2$. Il est alors direct de vérifier que, sous les hypothèses $\mathrm{(H_1)}-\mathrm{(H_4)}$ de \cite{DMS2011}, cet opérateur $S$ convient pour l'Hypothèse~\ref{HypoDMS}. Une fois un tel opérateur $S$ obtenu, la méthode DMS pour obtenir une décroissance hypocoercivie dans $L^2(\mu)$ est la suivante: notons $B=$Id$+S$ et considérons le produit scalaire et la norme hilbertienne
\[\langle f,g\rangle_B \ = \ \langle f, Bg\rangle\,,\qquad \|f\|_B = \sqrt{\langle f,f\rangle_B}\,. \]
Cette dernière est équivalente à la norme usuelle de $L^2(\mu)$, plus précisément $1/2\|f\|_2^2 \leqslant \| f\|_B^2 \leqslant 3/2\| f\|_2^2$.  L'Hypothèse~\ref{HypoDMS} implique pour pour tout $f\in D(L)$ telle que $\mu f = 0$,
\[\pso f,Lf\psf_B \ \leqslant \ -\rho \|f\|_2^2 \ \leqslant \ - \frac{2\rho}{3} \|f\|_B^2\,.\]
En d'autres termes, $(e^{2\rho t/3}P_t)_{t\geqslant 0}$ est dissipatif sur $\{f\in L^2(\mu),\ \mu f=0\}$ muni du produit scalaire $\pso\cdot\psf_B$ de sorte que, par le théorème de    Lumer-Philips  \cite[Chapitre IX, p.250]{Yosida}, pour tout $f\in L^2(\mu)$ telle que $\mu f= 0$,
\[\|P_t f\|_B \ \leqslant\ e^{-2\rho t/3}\| f\|_B\,. \]
En conclusion, on obtient la décroissance hypocoercive de la norme usuelle de $L^2(\mu)$:
\[\| P_t f - \mu f\|_2 \ \leqslant \ \sqrt 2 \|P_t f -\mu f\|_B \ \leqslant \ \sqrt 2e^{-2\rho t/3}\|f-\mu f\|_B \ \leqslant \  \sqrt 3 e^{-2\rho t/3}\|f-\mu f\|_2\,.\]

En fait, étant donnée une fonction mesurable bornée $V$ sur $E$, le même argument s'applique au semi-groupe de Feynman-Kac $(P_t^V)_{t\geqslant 0}$ sur $L^2(\mu)$ défini par
\[P_t^V f(x) \ = \ \mathbb E_x \po f(X_t) e^{\int_0^t V(X_s)\dd s}\pf\,,\]
où l'indice $x$ précise la condition initiale du processus.  En effet, en notant
\[\Lambda(V) \ = \ \sup\left\{ \pso f, (L+V)f \psf_B \,:\, f\in D(L)\ ,\ \|f\|_B = 1 \right\}\,,\]
on obtient que $(e^{-t\Lambda(V)}P_t^V)_{t\geqslant 0}$ est dissipatif sur $(L^2(\mu),\pso\cdot\psf_B)$ et donc que, pour tout $f\in L^2(\mu)$,
\begin{eqnarray}\label{EqPtV}
\| P_t^V f\|_B & \leqslant  & e^{t\Lambda(V)} \|f\|_B\,.
\end{eqnarray}
De ceci, en suivant \cite{Wu} et \cite{CattiauxGuillin}, on obtient les inégalités de concentration suivantes

\begin{thm}\label{TheoremDeviation}
Sous l'Hypothèse \ref{HypoDMS}, soit $\nu \ll \mu$ une mesure de probabilité sur $E$ et $V$ une fonction mesurable bornée sur $E$. Alors, pour tout $t\geqslant 0$ et tout $r\geqslant 0$, 
\begin{eqnarray*}
\mathbb P_\nu \po \frac1t \int_0^t V(X_s) \dd s -\mu V \geqslant r \pf & \leqslant &  \sqrt 2 \left\| \frac{\dd \nu}{\dd \mu}\right\|_2 e^{-t h(r)} \,,
\end{eqnarray*}
où
\[h(r) = \frac{\rho r^2}{25\|V\|_2^2 + 6 \| V\|_\infty r} \,. \]
\end{thm}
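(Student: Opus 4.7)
Le plan consiste à transposer dans le cadre hypocoercif la méthode classique de Wu--Cattiaux-Guillin \cite{Wu,CattiauxGuillin}, fondée sur la contractivité du semi-groupe de Feynman-Kac fournie par \eqref{EqPtV}. Notons $\tilde V = V - \mu V$. Par l'inégalité exponentielle de Markov, pour tout $\lambda > 0$,
\[\mathbb P_\nu\po \frac 1 t\int_0^t V(X_s)\dd s - \mu V \geqslant r\pf \ \leqslant \ e^{-\lambda tr}\int P_t^{\lambda\tilde V}\1\,\dd\nu\,.\]
Un Cauchy-Schwarz relativement à $\mu$ fait apparaître le facteur $\|\dd\nu/\dd\mu\|_2$, et l'équivalence entre $\|\cdot\|_2$ et $\|\cdot\|_B$ combinée à \eqref{EqPtV} et à l'identité $S\1 = 0$ (d'où $\|\1\|_B = 1$, cette identité résultant de l'Hypothèse~\ref{HypoDMS} appliquée à la fonction constante) donne une majoration de la probabilité par $\sqrt 2\,\|\dd\nu/\dd\mu\|_2\,\exp(t(\Lambda(\lambda\tilde V) - \lambda r))$.

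Le cœur du travail est l'estimation de $\Lambda(\lambda\tilde V)$. Pour $f\in D(L)$ avec $\|f\|_B = 1$, on décompose $f = c + g$ où $c = \mu f$ et $g = f - \mu f$. L'Hypothèse~\ref{HypoDMS} fournit immédiatement $\pso f, Lf\psf_B = \pso f, Lf + SLf\psf \leqslant -\rho\|g\|_2^2$. Pour le second terme $\pso f,\tilde V f\psf_B = \int\tilde V f^2\dd\mu + \pso Sf,\tilde V f\psf$, le développement $f^2 = c^2 + 2cg + g^2$ combiné aux identités $\mu\tilde V = \mu g = 0$, à Cauchy-Schwarz et au contrôle $\|Sf\|_2 \leqslant \|g\|_2/2$, conduit à une majoration de la forme
\[\pso f,\tilde V f\psf_B \ \leqslant \ \frac{5}{2}|c|\|V\|_2\|g\|_2 + 3\|V\|_\infty\|g\|_2^2\,.\]
Une inégalité de Young sur le terme croisé, de paramètre $\rho - 3\lambda\|V\|_\infty$ (positif sous la condition $\lambda < \rho/(3\|V\|_\infty)$), absorbe exactement la contribution en $\|g\|_2^2$ dans la dissipation $-\rho\|g\|_2^2$; compte tenu de $c^2 \leqslant \|f\|_B^2 = 1$, il en résulte
\[\Lambda(\lambda\tilde V) \ \leqslant \ \frac{25\lambda^2\|V\|_2^2}{16(\rho - 3\lambda\|V\|_\infty)}\,.\]

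Pour conclure, le choix $\lambda = 2\rho r/(25\|V\|_2^2 + 6\|V\|_\infty r)$ respecte la contrainte, et un calcul direct donne $\rho - 3\lambda\|V\|_\infty = 25\rho\|V\|_2^2/(25\|V\|_2^2 + 6\|V\|_\infty r)$, d'où $\lambda r - \Lambda(\lambda\tilde V) \geqslant 7\rho r^2/[4(25\|V\|_2^2 + 6\|V\|_\infty r)] \geqslant h(r)$, ce qui achève la preuve. L'obstacle principal, davantage algébrique que conceptuel, est l'identification de ce bon $\lambda$: il faut voir que la structure $\lambda r - C\lambda^2/(\rho - C'\lambda)$ se linéarise via un $\lambda$ proportionnel à $r/(\|V\|_2^2 + \|V\|_\infty r)$, ce qui reconstitue la forme de type Bernstein interpolant entre un régime sous-gaussien pour $r$ petit (où $\lambda\sim r$, exposant $\sim r^2/\|V\|_2^2$) et un régime sous-exponentiel pour $r$ grand (où $\lambda$ sature, exposant $\sim r/\|V\|_\infty$).
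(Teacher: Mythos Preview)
Your proof is correct and follows essentially the same route as the paper's: exponential Markov, Cauchy--Schwarz against $\dd\nu/\dd\mu$, the Feynman--Kac contraction \eqref{EqPtV}, then a bound on $\Lambda(\lambda\tilde V)$ obtained by splitting $f$ into its mean $c$ and fluctuation $g$ and exploiting the hypocoercive dissipation $\langle f,Lf\rangle_B\leqslant -\rho\|g\|_2^2$. The only differences are organizational: the paper normalizes $\|f\|_2=1$ and encodes the mean/fluctuation split via the parameter $\gamma$ with $1+\gamma^2=1/(\mu f)^2$, then optimizes in $\gamma$ and afterwards in $\lambda$, whereas you normalize $\|f\|_B=1$, use the observation $c^2\leqslant\|f\|_B^2$ directly, absorb the cross term by Young's inequality, and plug in a specific $\lambda$---this is slightly cleaner and in fact yields $\tfrac74 h(r)$ rather than $h(r)$.
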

%

Évidemment en changeant $V$ en $-V$, ceci fournit des intervalles de confiance non-asymptotiques pour la moyenne empirique des fonctions bornées.

Dans la mesure où les estimations obtenues par DMS ne sont en général pas optimale, nous avons préféré ici donner une expression simple pour $h(r)$ plutôt que la plus grande possible.

Bien qu'un résultat similaire soit déjà établi dans \cite{ReyBellet}, nous en laissons la preuve, dans la mesure où elle est élémentaire, brève et intéressante.

\begin{proof}
Sans perte de généralité on suppose que $\mu V = 0$. Par les inégalités de Chebyshev  et de Cauchy-Schwarz et en utilisant \eqref{EqPtV}, pour tout $\nu \ll \mu$ et $\lambda\geqslant 0$,
\begin{eqnarray}
\mathbb P_\nu \po \frac1t \int_0^t V(X_s) \dd s  \geqslant r \pf & \leqslant & e^{-\lambda t r} \mathbb E_\nu \po e^{\lambda\int_0^t V(X_s)\dd s} \pf \notag\\
& =& e^{-\lambda t r} \int_E P_t^{\lambda V} \1 \dd \nu\notag \\
& \leqslant & \sqrt 2 e^{-\lambda t r}  \left\| \frac{\dd \nu}{\dd \mu}\right\|_2 \left\|P_t^{\lambda V}\1\right \|_B\notag\\
& \leqslant & \sqrt 2 \left\| \frac{\dd \nu}{\dd \mu}\right\|_2 e^{-\lambda t r + t \Lambda(\lambda V)} \label{EqDeviation}\,.
\end{eqnarray}
On borne alors $\Lambda(\lambda V)$ dans l'esprit de \cite{CattiauxGuillin}. D'abord, par l'Hypothèse~\ref{HypoDMS},  
\begin{eqnarray*}
\Lambda(V) & \leqslant & 2 \sup\left\{ -\rho \| f-\mu f\|^2  +  \pso f, V f \psf_B \,:\, f\in D(L)\ ,\ \|f\|_2 = 1 \right\}
\end{eqnarray*} 
Fixons $f\in D(L)$ avec $\|f\|_2 = 1$ et soit $\gamma \geqslant 0$ donné par $1+\gamma^2 = 1/(\mu f)^2$, de sorte que
\[g \ := \ \frac1\gamma \po \sqrt{1+\gamma^2} f - 1\pf \ = \ \frac{\sqrt{1+\gamma^2}}{\gamma}(f-\mu f)\]
satisfait $\mu g = 0$ et
\[\mu( g^2) \ = \ \frac{1+\gamma^2}{\gamma^2}  \po \mu(f^2) - (\mu f)^2 \pf \ = \ 1\,.\]
D'autre part,
\begin{eqnarray*}
\pso f, V f \psf_B & = & \int V f^2 \dd \mu + \pso A (f-\mu f),(Vf-\mu(Vf))\psf\\
 &  = & \int V f^2 \dd \mu + \pso A (f-\mu f),V(f-\mu f)\psf +   \pso A (f-\mu f),(V-\mu V)\psf \mu f \\
 & = & \frac{1}{1+\gamma^2} \int V \po \gamma^2 g^2 + 2 \gamma g +1 \pf \dd \mu + \frac{\gamma^2}{1+\gamma^2} \pso A g,V g\psf + \frac{\gamma}{1+\gamma^2}\pso A g,V \psf\\
 & \leqslant& \frac{3\gamma^2}{2(1+\gamma^2)}\| V\|_\infty + \frac{5\gamma}{2(1+\gamma^2)}\| V\|_2 \,,
\end{eqnarray*}
où nous avons utilisé l'inégalité de Cauchy-Schwarz et le fait que $\|Ag\|\leqslant 1/2$. En conséquence,
\[\pso f, (L+V) f \psf_B \ \leqslant\  \frac{\gamma}{1+\gamma^2} \po -\gamma\rho + \frac{3\gamma}{2}\| V\|_\infty + \frac52\| V\|_2\pf\,,\]
et pour tout $\lambda\geqslant 0$,
\begin{eqnarray*}
 \Lambda(V) & \leqslant & 2\sup_{\gamma\geqslant 0} \left\{ \frac{\gamma}{1+\gamma^2} \po -\gamma\rho + \frac{3\gamma}{2}\| V\|_\infty + \frac52\| V\|_2\pf \right\}\\
&  \leqslant & 2 \sup_{\gamma\geqslant 0} \left\{  \gamma \po -\gamma\rho + \frac{3\gamma}{2}\| V\|_\infty + \frac52\| V\|_2\pf \right\}\\
& = & \frac{25\|V\|_2^2}{4\rho-6\|V\|_\infty} \qquad \text{si }3\|V\|_\infty < 2\rho\,,\qquad +\infty\qquad\text{sinon.}
\end{eqnarray*}
En particulier, en notant
\[\lambda_0 := 2\rho/(3\|V\|_\infty)\,,\qquad \beta = \frac{25 \|V\|_2^2}{6 \|V\|_\infty } \,,\]
on obtient
\begin{eqnarray*}
\sup_{\lambda>0} \left\{\lambda r - \Lambda(\lambda V)\right\} & \geqslant & \sup_{\lambda\in[0,\lambda_0)} \left\{\lambda r - \frac{\beta  \lambda^2 }{\lambda_0-\lambda} \right\}\\
& = & \frac{\lambda_0 r^2}{\beta \po 1 + \sqrt{1+r/\beta}\pf^2}  \ \geqslant \ \frac{\lambda_0 r^2}{4(\beta+r)} \ = \  h(r)\,.
\end{eqnarray*}
On conclut en prenant le supremum en $\lambda\in[0,\lambda_0)$ dans \eqref{EqDeviation}.
\end{proof}

Soit $U$ un sous-ensemble mesurable de $E$ et soit $T_U = \inf\{t\geqslant 0,\ X_t\in U\}$ le premier temps d'atteinte de $U$ par le processus $(X_t)_{t\geqslant 0}$. 
\begin{thm}\label{TheoremHitting}
Sous l'Hypothèse \ref{HypoDMS}, si  $\mu(U)>0$, alors pour tout $\theta< h(\mu(U))$ et toute mesure de probabilité $\nu\ll \mu$,
\begin{eqnarray*}
\mathbb E_\nu \po e^{\theta T_U} \pf & \leqslant & 1+  \sqrt 2 \left\| \frac{\dd \nu}{\dd \mu}\right\|_2  \frac{\theta}{h\po \mu(U)\pf-\theta}\,.
\end{eqnarray*}
\end{thm}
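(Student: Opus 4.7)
The plan is to apply Theorem~\ref{TheoremDeviation} to the bounded function $V = -\1_U$ in order to obtain a subexponential tail bound on $T_U$, and then to deduce the exponential-moment estimate via the classical identity
\[ \mathbb{E}_\nu\po e^{\theta T_U}\pf \ = \ 1 + \theta \int_0^{\infty} e^{\theta t}\, \mathbb{P}_\nu(T_U > t)\,\dd t. \]

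First I would observe that, by the very definition of $T_U$, the event $\{T_U > t\}$ forces $X_s \notin U$ for every $s \in [0,t]$, hence $\int_0^t \1_U(X_s)\,\dd s = 0$ on this event. With $V = -\1_U$, which satisfies $\mu V = -\mu(U)$, $\|V\|_\infty = 1$ and $\|V\|_2^2 = \mu(U)$, this gives the inclusion
\[ \{T_U > t\} \ \subseteq \ \left\{ \frac1t \int_0^t V(X_s)\,\dd s - \mu V \geqslant \mu(U)\right\}. \]
Applying Theorem~\ref{TheoremDeviation} with $r = \mu(U)$ therefore yields
\[ \mathbb{P}_\nu(T_U > t) \ \leqslant \ \sqrt 2 \left\| \frac{\dd \nu}{\dd \mu}\right\|_2 e^{-t h(\mu(U))}, \]
where, with these norms, $h(\mu(U)) = \rho \mu(U)/31 > 0$, matching the notation of the statement.

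Second, I would simply insert this tail bound into the Laplace-transform identity above and apply Fubini. For any $\theta < h(\mu(U))$ the resulting integral $\int_0^\infty e^{(\theta - h(\mu(U)))t}\,\dd t$ converges to $1/(h(\mu(U)) - \theta)$; factoring the multiplicative constant $\sqrt 2 \|\dd\nu/\dd\mu\|_2$ and the factor $\theta$ in front of the integral produces exactly the bound claimed in Theorem~\ref{TheoremHitting}.

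I do not anticipate a genuine obstacle: both steps are standard reductions, and the hard analytic work is already done by Theorem~\ref{TheoremDeviation}. The only point deserving a line of justification is the pathwise inclusion $\{T_U > t\} \subseteq \{\int_0^t \1_U(X_s)\,\dd s = 0\}$, which is immediate from the definition of $T_U$ as an infimum (if $X_s \in U$ for some $s \in [0,t]$, then $T_U \leqslant s \leqslant t$, contradicting $T_U > t$). Everything else amounts to routine computation.
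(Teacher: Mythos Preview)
Your proposal is correct and follows essentially the same route as the paper: apply Theorem~\ref{TheoremDeviation} with $V=-\1_U$, $r=\mu(U)$ to get the exponential tail bound on $T_U$, and then integrate against $\theta e^{\theta t}$ via the Laplace-transform identity. The explicit evaluation $h(\mu(U))=\rho\mu(U)/31$ is a nice extra that the paper omits.
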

\begin{proof}
En suivant \cite{CattiauxGuillinPAZ} on remarque que pour tout $t\geqslant 0$,
\[\{ T_U \geqslant t\} \subset \left\{\frac1t\int_0^t \1_U(X_s) \dd s = 0\right\}\,.\]
Le Théorème \ref{TheoremDeviation} appliqué à $V=-\1_U$ et $r=\mu(U)$ donne
\begin{eqnarray*}
\mathbb P_\nu \po -\frac1t \int_0^t \1_U(X_s) \dd s +\mu(U) \geqslant \mu(U) \pf  &  \leqslant  &\sqrt 2 \left\| \frac{\dd \nu}{\dd \mu}\right\|_2 e^{-t h\po \mu(U)\pf }\,.
\end{eqnarray*}
On conclut alors par
\[\mathbb E_\nu \po e^{\theta T_U} \pf \ = \ 1 + \int_0^\infty \theta e^{\theta t} \mathbb P_\nu \po T_U\geqslant  t\pf \dd t \ \leqslant \ 1 + \sqrt 2 \left\| \frac{\dd \nu}{\dd \mu}\right\|_2 \int_0^\infty \theta  e^{t\po\theta- h\po \mu(U)\pf\pf } \dd t\,.\]
\end{proof}
 
En particulier, le Théorème \ref{TheoremHitting} implique que, pour tout $U$ avec $\mu(U)>0$,  $W(x):=\mathbb E_x \po e^{\theta T_U} \pf $ est finie pour $\mu$-presque tout $x\in E$, et $W\in L^1(\mu)$. En fait on peut avoir mieux sous hypothèse de régularité. 

\begin{assu}\label{HypoHypoco1}
Le noyau de transition $p_t(x,\dd y)$ du processus admet pour tout $t>0$ et $x\in E$ une densité  $r_t$ par rapport à $\mu$, autrement dit $p_t(x,\dd y) = r_t(x,y)\mu(\dd y)$, qui est telle que $y\mapsto r_t(x,y)$ est dans  $L^2(\mu)$ pour tout $x\in E$ avec $x\mapsto \|r_t(x,\cdot)\|_2$  localement bornée.
\end{assu}

Sous les Hypothèses \ref{HypoDMS} et \ref{HypoHypoco1}, en notant $T'_U = \inf\{ t\geqslant 1,\ X_t\in U\}$, alors $T_U' \geqslant T_U$ et 
\begin{eqnarray}\label{EqTemps}
\mathbb E_x \po e^{\theta T_U}\pf & \leqslant & \mathbb E_x \po e^{\theta T_U'}\pf \ = \ \mathbb E_{p_1(x,\cdot)} \po e^{\theta T_U}\pf\,. 
\end{eqnarray}
Par le Théorème \ref{TheoremHitting} et l'hypothèse sur $x\mapsto \|r_t(x,\cdot)\|_2$,  $W(x)$ est donc finie pour $x\in E$ et $W$ est localement bornée.
 
\begin{assu}\label{HypoHypoco2}
$E$ est une variété $\mathcal C^\infty$ de dimension $d$ et  $L=Y_0+\sum_{i=1}^m Y_i^2 $ pour un certain $m\geqslant 1$ où $Y_0,\dots,Y_m$ sont des champs de vecteurs $\mathcal C^\infty$ bornés dont toutes les dérivées sont bornées et tels que, pour un certain $\alpha>0$ et un certain $N\in\N$,
\begin{eqnarray}\label{eqHormander} 
\forall x\in E\,,\, y\in\R^{d}\,,\qquad \sum_{j=1}^m \pso Y_j(x),y\psf^2 + \sum_{Z\in L_N} \pso Z(x),y\psf^2 & \geqslant & \alpha |y|^2\,,
\end{eqnarray}
où $L_N$ est l'ensemble des crochets de Lie de $Y_0,\dots,Y_m$ de longueur dans $\cco 1,N\ccf$.
\end{assu} 
 
\begin{cor}\label{CorLyap}
Sous les Hypothèses \ref{HypoDMS}, \ref{HypoHypoco1} et \ref{HypoHypoco2}, soit $U$ un sous-ensemble mesurable compact de $E$ avec $\mu(U)>0$ et $\theta \in(0, h(\mu(U)))$. Soit $W(x) = \mathbb E_x(e^{\theta T_U})$. Alors $W$ est dans $\mathcal C^\infty(\R^d) \cap L^1(\mu)$ et est solution du problème de Dirichlet
 \[LW+\theta W =0 \qquad\text{on }U^c\,,\qquad\qquad W=1\qquad \text{on }U\,.\]
\end{cor}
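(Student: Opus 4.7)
The plan is to proceed in three steps: (i) collect what is already known about $W$; (ii) show that $W$ is a distributional solution of $(L+\theta)W = 0$ on the open set $U^c$ via the strong Markov property; (iii) upgrade to pointwise $\mathcal C^\infty$ regularity using Hörmander hypoellipticity. The $L^1(\mu)$ integrability of $W$ is exactly Theorem~\ref{TheoremHitting} applied with $\nu = \mu$, and the boundary condition $W = 1$ on $U$ is immediate since $T_U = 0$ whenever $X_0 \in U$; local boundedness of $W$ on $E$ is precisely what the discussion preceding the corollary provides via \eqref{EqTemps} and Assumption~\ref{HypoHypoco1}. Thus only (ii) and (iii) require real work.

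For step (ii), the key observation is that, by the strong Markov property applied at the stopping time $t \wedge T_U$,
\[ W(x) \ = \ \mathbb E_x \bigl[ e^{\theta(t\wedge T_U)} W(X_{t\wedge T_U}) \bigr] \qquad \forall\, t \geqslant 0,\ x \in E, \]
so that $M_t := e^{\theta(t\wedge T_U)} W(X_{t\wedge T_U})$ is a $\mathbb P_x$-martingale. To extract a PDE, I would fix a test function $\phi \in \mathcal C_c^\infty(U^c)$ with compact support $K \subset U^c$, introduce the exit time $\tau_K = \inf\{t \geqslant 0 : X_t \notin K\}$ (on which $W$ is bounded by local boundedness), and write the martingale identity at $t \wedge \tau_K$ with initial law $\mu$. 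Using the density $r_t$ of Assumption~\ref{HypoHypoco1} to expand $\mathbb E_\mu$, dividing by $t$, and passing to $t \to 0^+$ by dominated convergence yields
\[ \int_E W \, (L^* + \theta) \phi \, \dd \mu \ = \ 0 \qquad \forall\, \phi \in \mathcal C_c^\infty(U^c), \]
which is precisely $(L+\theta)W = 0$ in the sense of distributions on $U^c$.

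For step (iii), Assumption~\ref{HypoHypoco2} supplies exactly the uniform Hörmander bracket condition~\eqref{eqHormander}, so that $L$, and a fortiori $L + \theta$, is hypoelliptic in the classical sense. Any distributional solution of $(L + \theta) W = 0$ on an open set is therefore $\mathcal C^\infty$ on that set, and the PDE then holds pointwise on $U^c$. Combined with $W \equiv 1$ on $U$ and the $L^1(\mu)$ bound, this gives the full statement.

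The main obstacle is step (ii): the derivation of the distributional equation from the martingale identity is technical rather than conceptual, but requires care in justifying the $t \to 0^+$ limit (using local boundedness of $W$ together with the local bound on $x \mapsto \|r_t(x,\cdot)\|_2$ from Assumption~\ref{HypoHypoco1}), in absorbing the boundary contribution at $\tau_K$ (which vanishes because $\phi$ is supported in the interior of $K$), and in correctly identifying the formal adjoint $L^*$ in $L^2(\mu)$ under the smooth bounded-coefficient setting of Assumption~\ref{HypoHypoco2}.
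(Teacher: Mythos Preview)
Your outline is correct and is precisely the standard route; the paper's own proof is a one-line reference to \cite[Theorem~5.14]{Cattiauxhormander} and to the proof of (H2)$\Rightarrow$(H1) in \cite{CattiauxGuillinPAZ}, and your three steps (strong-Markov martingale identity, weak equation on $U^c$, hypoelliptic bootstrap) are exactly what those references carry out. You are thus supplying the details the paper defers; the only caveat is that hypoellipticity gives $W\in\mathcal C^\infty(U^c)$ while $W\equiv 1$ on $U$, so the global $\mathcal C^\infty$ claim should be read modulo regularity across $\partial U$, which neither your sketch nor the paper addresses but which is irrelevant for the Lyapunov inequality stated afterwards.
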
 
 
En particulier, dans ce cas, $W$ est une fonction de  Lyapunov pour $L$ au sens où 
\[LW \ \leqslant \ -\theta W + C\,,\qquad C:= \sup\{LW(x)+\theta W(x)\, :\, x\in U\}\,.\] 
 
\begin{proof}
La preuve est basée sur \cite[Theorem 5.14]{Cattiauxhormander} et similaire à la preuve de (H2)$\Rightarrow$(H1) dans \cite{CattiauxGuillinPAZ}. 
\end{proof} 

L'hypo-ellipticité forte requise par l'Hypothèse \ref{HypoHypoco2}, qui est déjà supposée dans l'étude du case réversible dans \cite{CattiauxGuillinPAZ}, est assez restrictive, en particulier lorsque $E$ n'est pas compact (ce qui est typiquement le cas où le Corollaire~\ref{CorLyap} est intéressant). Il devrait être possible de montrer que $W$ est une fonction de Lyapunov pour $L$ sous des conditions plus faibles, mais cette question dépasse le cadre élémentaire de la présente note. 

%

\section{Quelques exemples}

Le cobaye classique pour l'hypococercivité est la diffusion de Langevin (ou de Fokker-Planck cinétique) sur  $\R^d\times\R^d$ de générateur
\[L f \ = \ v\cdot\na_x f - \po \na U(x)+ v\pf \cdot \na_v f + \Delta_v f\]
pour un certain $U\in\mathcal C^2(\R^d)$. Supposons que $\int_{\R^d} e^{-U(x)}\dd x<+\infty$ et notons $\mu$ la mesure de probabilité sur $\mathbb R^d\times\R^d$ de densité par rapport à la mesure de Lebesgue proportionnelle à  $e^{-U(x)-|v|^2/2}$. Supposons que
\[\liminf_{|x|\rightarrow +\infty} \po |\na U(x)|^2-2\Delta U(x)\pf \ > \ 0\]
et qu'il existe $c_1>0$, $c_2\in[0,1)$ et $c_3>0$ tels que, sur $\R^d$,
\[\Delta U \leqslant c_1 + \frac{c_2}{2}|\na U|^2\,,\qquad |\na^2 U| \leqslant c_3 \po 1 + |\na U|\pf\,.\]
Alors, en suivant la preuve de \cite[Theorem 10]{DMS2011} (plus précisément la construction de l'opérateur borné $ A$), l'Hypothèse~ \ref{HypoDMS} est satisfaite de sorte que les Théorèmes~\ref{TheoremDeviation} et \ref{TheoremHitting} s'appliquent. En comparaison, l'approche à la Meyn-Tweedie, qui fournit elle aussi l'intégrabilité exponentielle des temps d'atteinte, a été appliquée à la diffusion de Langevin sous diverses conditions sur    $U$. Par exemple, du fait de la difficulté à construire une fonction de Lyapunov convenable, sept conditions techniques sont requises dans \cite[Hypothesis 1.1]{Talay} sur $U$, qui mettent en jeu une fonction $R$ dont l'existence est alors établie dans quelques cas particuliers. Les conditions de \cite{DMS2011} sont plus générales et plus simples à vérifier.

Comme mentionné dans l'introduction, la comparaison entre  \cite{Andrieu} et\cite{DurmusGuillinMonmarche} pour le \emph{Bouncy Particle process} est à nouveau à l'avantage de la méthode DMS. D'autres exemples où la méthode DMS s'applique avec succès (et donc où l'Hypothèse~\ref{HypoDMS} est satisfaite) sont présentés dans \cite{ReyBellet} et les références qui y figurent.


\bigskip

Autre exemple, considérons la diffusion fortement auto-intéragissante étudiée dans  \cite{BenaimGauthier}, qui est le processus $(X_t)_{t\geqslant 0}$ sur le $\T^d$ (avec $\T = \R/(2\pi\Z)$) solution de
\[\dd X_t \ = \ \dd B_t - \int_0^t \nabla_{x_1} V(X_t,X_s) \dd s\]
où $(x_1,x_2)\in \T^d\times \T^d \mapsto V(x_1,x_2) \in \R$ est un potentiel $\mathcal C^\infty$ et $(B_t)_{t\geqslant 0}$ est un mouvement Brownien standard de dimension $d$. Sous l'hypothèse supplémentaire que $V$ peut être décomposée en
\[V(x_1,x_2) \ = \ \sum_{j=1}^n a_j e_j(x_1)e_j(x_2)\]
où $n\in\N_*$ et, pour tout $j\in\cco 1,n\ccf$, $e_j$ est une fonction propre du laplacien (avec $\pso e_j,e_k\psf = 0$ si $j\neq k$) et $a_j\in\R$, alors le processus peut s'étendre en un processus de Markov de dimension finite en posant $U_{j,t}  =  \int_0^t e_j(X_s)\dd s$. En effet, $(X,U_1,\dots,U_j)\in\T^d\times\R^n$ est solution du système d'équations différentielles stochastiques
\begin{eqnarray*}
\dd X_t & = & \dd B_t - \sum_{j=1}^n a_j \nabla e_j(X_t) U_{j,t} \dd t \\
\forall j\in\cco 1,n\ccf\,,\qquad \dd U_{j,t} & = & e_j(X_t)\dd t\,.
\end{eqnarray*}
L'exemple de base est $V(x_1,x_2) = \cos(x_1-x_2) = \cos(x_1)\cos(x_2)+\sin(x_1)\sin(x_2)$ quand $d=1$, auquel cas le système devient
\begin{eqnarray*}
\dd X_t &= & \dd B_t + \sin(X_t) U_{1,t} \dd t - \cos(X_t) U_{2,t} \dd t\\
\dd U_{1,t} & =& \cos(X_t)\dd t\\
\dd U_{2,t} & = & \sin(X_t)\dd t\,.
\end{eqnarray*}

Dans le cas général, sous réserve que $a_j>0$ pour tout $j\in\cco 1,n\ccf$, le processus admet une unique mesure de probabilité invariante
\[\mu(\dd x,\dd u_1,\dots,\dd u_n) \ \propto \ \exp\po - \frac12\sum_{j=1}^n a_j|\lambda_j|^2 u_j^2\pf \dd x\dd u_1\dots\dd u_n \]
où $\lambda_j$ est la valeur propre du laplacien associée à $e_j$. Il est démontré dans \cite[Section 5]{BenaimGauthier} que la méthode DMS s'applique, et donc que l'Hypothèse~\ref{HypoDMS} est satisfaite. En comparaison, nous sommes au courant de tentatives non publiées de construction d'une fonction de Lyapunov pour le système $(X,U_1,\dots,U_n)$, qui étaient fructueuses mais très délicate dans le cas particulier $V(x_1,x_2) = \cos(x_1-x_2)$ et qui n'ont pu être étendues au cas général (ce qui souligne encore le fait que la construction d'une fonction de Lyapunov est une tâche \emph{ad hoc} qui doit être répétée à chaque nouveau système).

\bigskip

Enfin, remarquons que l'Hypothèse~\ref{HypoHypoco2} n'est satisfaite dans aucun des exemples ci-dessus (en particulier pour les diffusions de Langevin et auto-interagissantes, $Y_0$ n'est pas borné).  Donnons donc maintenant un exemple pour s'assurer que le cadre d'application du Corollaire~\ref{CorLyap} n'est pas vide. Considérons la diffusion $(X_t,U_t)_{t\geqslant 0}$ sur $E=\R\times\T$ solution de
\[\left\{\begin{array}{rcl}
\dd X_t & = & \cos(U_t)\dd t\\
\dd U_t & = & -  V'(X_t) \sin(U_t)\dd t + \dd B_t\,,
\end{array}\right.\]
avec $V(x) = x^2/\sqrt{1+x^2}$. C'est une diffusion hypo-elliptique, non elliptique sur un espace non-compact. Le générateur associé est   $L=Y_0+Y_1^2$ avec 
\[Y_1 = \partial_u\,,\qquad Y_0 = \cos(u)\partial_x -V'(x)\sin(u)\partial_u\,,\]
qui sont des champs de vecteurs $\mathcal C^\infty$, bornés et dont toutes les dérivées sont bornées. Posons $Z_1 = \partial_u$,
\[\begin{array}{rcccl}
Z_2 &=& [Y_1,Y_0]& =& -\sin(u)\partial_x -V'(x)\cos(u)\partial_u\\
Z_3&=&[Y_1,Z_2] &=& -\cos(u)\partial_x +V'(x)\sin(u)\partial_u\,.
\end{array}\] 
Alors pour tout $f\in\mathcal C^\infty(E)$ et $(x,u)\in E$,
\[\po Z_2 f(x,u)\pf^2 \ \geqslant \ \frac12 \sin^2(u) \po \partial_x f(x,u)\pf^2 - \|V'\|_\infty^2 \cos^2(u)\po \partial_u f(x,u)\pf^2\,,\]
et de même pour $Z_3$. En posant $a_1 = \|V'\|_\infty^2+1/2$ et $a_2=a_3=1$, on obtient
\[\sum_{j=1}^3 a_j \po Z_j f(x,u)\pf^2 \ \geqslant \ \frac12 |\na f(x,u)|^2\]
pour tout $f\in\mathcal C^\infty(E)$ et tout $(x,u)\in E$. L'Hypothèse~\ref{HypoHypoco2} est donc satisfaite.

Dans un second temps, montrons que la méthode DMS s'applique ici, c'est-à-dire que les conditions $(H_1)-(H_4)$ de \cite{DMS2011} sont remplies. Soit $\mu$ la mesure de probabilités sur $E$ de densité par rapport à la mesure de Lebesgue proportionnelle à $\exp(-V(x))$. Notons $Q=\partial_u^2$ et $T = Y_0$. Alors, par intégration par partie, on voit que $Q^* = Q$ et $T^*=-T$, où $G^*$ désigne le dual dans $L^2(\mu)$ d'un opérateur $G$.  En particulier, pour toutl $f\in \mathcal C^\infty_c(E)$,
\[\int_{E} L f \dd \mu \ = \ \int_{E} f (Q^*+T^*)\1 \dd \mu \ = \ \int_{E} f (Q-T)\1 \dd \mu \ = \ 0\,,  \]
ce qui montre que $\mu$ est invariante pour $L$. L'inégalité de Poincar\'e sur $\T$ donne
\[ \int_{E} (\partial_u f(x,u))^2 e^{-V(x)}\dd x \dd u \ \geqslant \ 4\pi^2 \int_E \po f(x,u) - \int_{\T} f(x,v) \dd v\pf^2 e^{-V(x)}\dd x \dd u\]
pour tout $f\in \mathcal C^\infty_c(E)$ (et donc, par densité, pour tout $f\in D(Q)$), ce qui est exactement la condition de coercivité microscopique $(H_1)$ de \cite{DMS2011}. Notons $\Pi f(x,u) = \int_\T f(x,v)\dd v$. Alors
\[T\Pi f(x,u) \ = \ \cos(u)\partial_x \int_\T f(x,v)\dd v\,,\]
et en particulier, puisque $\int_T\cos(u)\dd u = 0$, $T\Pi T = 0$, qui est $(H_3)$ dans \cite{DMS2011}. De plus, en intégrant derechef par partie, 
\[(T\Pi)^* f(x,u) \ = \ \int_\T \cos(v) \po V'(x)f(x,v) - \partial_x f(x,v)\pf\dd v\,,\]
de sorte que
\begin{eqnarray*}
(T\Pi)^*T\Pi f(x,u) & = & \int_\T \cos(v)^2 \po V'(x)\partial_x - \partial_x^2\pf \int f(x,w)\dd w \dd v\\
& =& \frac12 \po V'(x)\partial_x - \partial_x^2\pf \Pi f(x,u)\,.
\end{eqnarray*} 
En consequence,
\[\int_E \po T\Pi f\pf^2 \dd \mu \ = \ \frac12 \int_\R \po \partial_x \Pi f\pf^2 e^{-V} \ \geqslant \ \frac12C_P \int \po \Pi f -\mu f\pf^2 \dd \mu\,,\]
où $C_P$ est la constante de Poincaré de la mesure $e^{-V}$. C'est la condition de coercivité macroscopique $(H_2)$ de \cite{DMS2011}. Au vu de l'expression de $(T\Pi)^*T\Pi $, la dernière condition $(H_4)$ de \cite{DMS2011} suit de \cite[Lemma 4]{DMS2011}. En consequence, l'Hypothèse~ \ref{HypoDMS} est satisfaite.

Finalement, démontrons que l'Hypothèse~\ref{HypoHypoco1} est satisfaite. En notant $\tilde U_t = U_t+\pi$, remarquons que
\[\left\{\begin{array}{rcl}
\dd X_t & = & -\cos(\tilde U_t)\dd t\\
\dd \tilde U_t & = &  V'(X_t) \sin(\tilde U_t)\dd t + \dd B_t\,.
\end{array}\right.\]
En d'autres termes, $(X_t,U_t+\pi)_{t\geqslant 0}$ est un processus de Markov de générateur $L^*$ (ce qui n'est pas sans rappeler le changement de variable $w=-v$ pour la diffusion de Langevin). En notant $p_t$ et $p_t^*$ les noyaux de transition de $L$ et $L^*$, on a donc, pour tout  $ (x,u),(y,w)\in E$,
\[p_t\po (y,w),(x,u)\pf  \ = \ p_t^*\po (x,u),(y,w)\pf \  = \ p_t\po (x,u-\pi),(y,w-\pi)\pf \,.\]
Puisque $\mu$ est invariante par la transformation $(x,u)\mapsto (x,u-\pi)$, les densités $r_t=p_t/\mu$ et $r_t^*=p_t/\mu$ satisfont la même relation. D'après \cite[Theorem 1.5]{CattiauxhormanderI}, l'Hypothèse~\ref{HypoHypoco2} implique que $p_t((x,u),(y,w)) \leqslant C(1\wedge t)^{-M}$ pour certains $C,M>0$ uniformément en $(x,u),(y,w)\in E$. On borne alors
\begin{eqnarray*}
\|r_t\po (x,u),\cdot\pf\|_2^2 & = & \int_E r_t\po (x,u),(y,w)\pf r_t\po (y,w+\pi),(x,u+\pi)\pf e^{-V(y)} \dd y \dd w  \\
& \leqslant & \frac{Ce^{V(x)}}{\po 1\wedge t\pf^M}  \int_E r_t\po (x,u),(y,w)\pf  e^{-V(y)} \dd y \dd w \ = \ \frac{Ce^{V(x)}}{\po 1\wedge t\pf^M} \,,
\end{eqnarray*}
ce qui établit l'Hypothèse~\ref{HypoHypoco1}.

En conclusion, Corollaire~\ref{CorLyap} s'applique pour le processus $(X_t,U_t)_{t\geqslant 0}$.De plus, le Théorème~\ref{TheoremHitting}, l'inégalité~\eqref{EqTemps} et la borne quantitative sur $\|r_t\po (x,u),\cdot\pf\|_2$ donnent
\[  \mathbb E_{(x,u)} \po e^{\theta T_U} \pf \leqslant \ C e^{\frac12 V(x)}\]
pour un certain $C>0$ pour tout $x\in E$ et tout ensemble $U$ avec $\mu(U)>0$ et tout $\theta<h\po \mu(U)\pf$.

\bigskip

\noindent\textbf{Support financier.} Ce travail a été supporté par le project EFI ANR-17-CE40-0030 de l'Agence Nationale de Recherche Française.

\bibliographystyle{plain}
\bibliography{biblio}

\begin{thebibliography}{10}

\bibitem{Andrieu}
Christophe {Andrieu}, Alain {Durmus}, Nikolas {N{\"u}sken}, and Julien
  {Roussel}.
\newblock {Hypocoercivity of Piecewise Deterministic Markov Process-Monte
  Carlo}.
\newblock {\em arXiv e-prints}, page arXiv:1808.08592, Aug 2018.

\bibitem{Cattiaux2008}
Dominique Bakry, Patrick Cattiaux, and Arnaud Guillin.
\newblock Rate of convergence for ergodic continuous {M}arkov processes:
  {L}yapunov versus {P}oincar\'e.
\newblock {\em J. Funct. Anal.}, 254(3):727--759, 2008.

\bibitem{BenaimGauthier}
Michel Bena\"{\i}m and Carl-Erik Gauthier.
\newblock Self-repelling diffusions on a {R}iemannian manifold.
\newblock {\em Probab. Theory Related Fields}, 169(1-2):63--104, 2017.

\bibitem{ReyBellet}
Jeremiah {Birrell} and Luc {Rey-Bellet}.
\newblock {Concentration inequalities and performance guarantees for
  hypocoercive MCMC samplers}.
\newblock {\em arXiv e-prints}, page arXiv:1907.11973, Jul 2019.

\bibitem{CattiauxhormanderI}
Patrick Cattiaux.
\newblock Calcul stochastique et op\'{e}rateurs d\'{e}g\'{e}n\'{e}r\'{e}s du
  second ordre. {I}. {R}\'{e}solvantes, th\'{e}or\`eme de {H}\"{o}rmander et
  applications.
\newblock {\em Bull. Sci. Math.}, 114(4):421--462, 1990.

\bibitem{Cattiauxhormander}
Patrick Cattiaux.
\newblock Calcul stochastique et op\'{e}rateurs d\'{e}g\'{e}n\'{e}r\'{e}s du
  second ordre. {II}. {P}robl\`eme de {D}irichlet.
\newblock {\em Bull. Sci. Math.}, 115(1):81--122, 1991.

\bibitem{CattiauxGuillin}
Patrick Cattiaux and Arnaud Guillin.
\newblock Deviation bounds for additive functionals of markov processes.
\newblock {\em ESAIM: Probability and Statistics}, 12:12--29, 2008.

\bibitem{CattiauxGuillin3}
Patrick Cattiaux and Arnaud Guillin.
\newblock Functional inequalities via {L}yapunov conditions.
\newblock In {\em Optimal transportation}, volume 413 of {\em London Math. Soc.
  Lecture Note Ser.}, pages 274--287. Cambridge Univ. Press, Cambridge, 2014.

\bibitem{CattiauxGuillin2}
Patrick Cattiaux and Arnaud Guillin.
\newblock Hitting times, functional inequalities, {L}yapunov conditions and
  uniform ergodicity.
\newblock {\em J. Funct. Anal.}, 272(6):2361--2391, 2017.

\bibitem{CattiauxGuillinPAZ}
Patrick Cattiaux, Arnaud Guillin, and Pierre-Andr\'e Zitt.
\newblock Poincar\'e inequalities and hitting times.
\newblock {\em Ann. Inst. Henri Poincar\'e Probab. Stat.}, 49(1):95--118, 2013.

\bibitem{DMS2011}
Jean Dolbeault, Cl\'ement Mouhot, and Christian Schmeiser.
\newblock Hypocoercivity for linear kinetic equations conserving mass.
\newblock {\em Trans. Amer. Math. Soc.}, 367(6):3807--3828, 2015.

\bibitem{DurmusGuillinMonmarche}
Alain {Durmus}, Arnaud {Guillin}, and Pierre {Monmarch{\'e}}.
\newblock {Geometric ergodicity of the bouncy particle sampler}.
\newblock {\em arXiv e-prints}, page arXiv:1807.05401, Jul 2018.

\bibitem{Talay}
Denis Talay.
\newblock Stochastic {H}amiltonian systems: exponential convergence to the
  invariant measure, and discretization by the implicit {E}uler scheme.
\newblock {\em Markov Process. Related Fields}, 8(2):163--198, 2002.

\bibitem{Wu}
Liming Wu.
\newblock A deviation inequality for non-reversible markov processes.
\newblock {\em Annales de l'I.H.P. Probabilit\'es et statistiques},
  36(4):435--445, 2000.

\bibitem{Yosida}
Kosaku Yosida.
\newblock {\em Functional analysis}.
\newblock Classics in Mathematics. Springer-Verlag, Berlin, 1995.
\newblock Reprint of the sixth (1980) edition.

\end{thebibliography}
\end{document}